\begin{document}

\newtheorem{theorem}{Theorem}[section]
\newtheorem{lemma}[theorem]{Lemma}
\newtheorem{proposition}[theorem]{Proposition}
\newtheorem{corollary}[theorem]{Corollary}

\theoremstyle{definition}
\newtheorem{definition}[theorem]{Definition}
\newtheorem{example}[theorem]{Example}
\newtheorem{formula}[theorem]{Formula}
\newtheorem{nothing}[theorem]{}
\newtheorem{exercise}[theorem]{Exercise}

\theoremstyle{remark}
\newtheorem{remark}[theorem]{Remark}

\numberwithin{equation}{section}

\renewcommand\arraystretch{1.2}

\newcommand{\twist}{{\pmb{\mathtt{tw}}}}
\newcommand{\contr}{{\mspace{1mu}\lrcorner\mspace{1.5mu}}}
\newcommand{\per}{{\mspace{-1mu}\cdot\mspace{-1mu}}}
\newcommand{\dual}{^{\vee}}
\newcommand{\de}{\partial}
\newcommand{\debar}{{\overline{\partial}}}
\newcommand{\desude}[2]{{\dfrac{\de #1}{\de #2}}}

\newcommand{\mapor}[1]{{\stackrel{#1}{\longrightarrow}}}
\newcommand{\ormap}[1]{{\stackrel{#1}{\longleftarrow}}}

\newcommand{\mapver}[1]{\Big\downarrow\vcenter{\rlap{$\scriptstyle#1$}}}
\newcommand{\vermap}[1]{\Big\uparrow\vcenter{\rlap{$\scriptstyle#1$}}}

\newcommand{\binfty}{\boldsymbol{\infty}}
\newcommand{\bi}{\boldsymbol{i}}
\newcommand{\vale}[1]{$[#1]$}

\newcommand\tensor{{\textstyle\bigotimes}}
\newcommand\somdir{{\textstyle\bigoplus}}
\newcommand\external{{\textstyle\bigwedge}}
\newcommand\symmetric{{\textstyle\bigodot}}

\renewcommand{\bar}{\overline}
\renewcommand{\Hat}[1]{\widehat{#1}}

\newcommand{\sA}{\mathcal{A}}
\newcommand{\Oh}{\mathcal{O}}
\newcommand{\sH}{\mathcal{H}}
\newcommand{\sI}{\mathcal{I}}
\newcommand{\sE}{\mathcal{E}}
\newcommand{\sF}{\mathcal{F}}
\newcommand{\sK}{\mathcal{K}}
\newcommand{\sL}{\mathcal{L}}
\newcommand{\sM}{\mathcal{M}}
\newcommand{\sG}{\mathcal{G}}
\newcommand{\sX}{\mathcal{X}}
\newcommand{\sB}{\mathcal{B}}
\newcommand{\sY}{\mathcal{Y}}

\newcommand{\Q}{\mathbb{Q}}
\newcommand{\C}{\mathbb{C}}
\newcommand{\Z}{\mathbb{Z}}
\newcommand{\K}{\mathbb{K}}
\newcommand{\N}{\mathbb{N}}
\newcommand{\ope}[1]{\operatorname{#1}}
\newcommand{\ad}{\operatorname{ad}}
\newcommand{\MC}{\operatorname{MC}}
\newcommand{\Def}{\operatorname{Def}}
\newcommand{\hDef}{\widetilde{\operatorname{Def}}}
\newcommand{\Hom}{\operatorname{Hom}}
\newcommand{\End}{\operatorname{End}}
\newcommand{\Image}{\operatorname{Im}}
\newcommand{\Der}{\operatorname{Der}}
\newcommand{\Mor}{\operatorname{Mor}}
\newcommand{\Hilb}{\operatorname{Hilb}}
\newcommand{\Cone}{\operatorname{Cone}}
\newcommand{\DER}{{{\mathcal D}er}}
\newcommand{\Coder}{{\operatorname{Coder}}}
\newcommand{\Id}{\operatorname{Id}}

\newcommand{\coker}{\operatorname{Coker}}
\newcommand{\NA}{\mathbf{NA}}
\newcommand{\ide}[1]{\mathfrak{#1}}
\newcommand{\Aut}{\operatorname{Aut}}
\newcommand{\copa}{\mathfrak{a}}
\newcommand{\copl}{\mathfrak{l}}
\newcommand{\copc}{\mathfrak{c}}

\renewcommand{\subjclassname}{%
\textup{2010} Mathematics Subject Classification}


\title{A relative version of the ordinary perturbation lemma}
\author{Marco Manetti}
\date{February 3, 2010}
\subjclass[2010]{16T15, 17B55, 18G35}

\maketitle

\begin{abstract}
The perturbation lemma and the homotopy transfer for $L_{\infty}$-algebras is proved in a elementary way by using a relative version of the ordinary perturbation lemma for chain complexes and the coalgebra perturbation lemma.

\end{abstract}

\section{Introduction}

Let $N$ be a differential graded vector space and 
let $M\subset N$ be a differential graded subspace such that the inclusion map
$\imath\colon M\to N$ is a quasi-isomorphism. The basic homology theory shows that there exists a homotopy $h\colon N\to N$ such that 
$Id+dh+hd\colon N\to N$ is a projection onto $M$.  
If $\tilde{d}$ is a new differential on $N$ such that $\de=\tilde{d}-d$ is ``small'' in some appropriate sense, then the \emph{ordinary perturbation lemma} (Theorem~\ref{thm.pertlemma}) gives explicit functorial formulas, in terms of $\de$ and $h$, for 
a differential $\tilde{D}$ on $M$ and for  an injective morphism of differential graded vector spaces 
$\tilde{\imath}\colon (M,\tilde{D})\to (N,\tilde{d})$.\par

Has been pointed out by Huebschmann and Kadeishvili \cite{HK} that if $M,N$ are  differential graded (co)algebra, and $h$ is a (co)algebra homotopy (Definition~\ref{def.coalgebracontraction}), then also  
$\tilde{\imath}$ is a morphism of differential graded (co)algebras. This assumption are verified for instance when we consider the tensor coalgebras generated by $M,N$ and the natural extension of $h$ to $T(N)$ (this fact is referred as \emph{tensor trick} in the literature). Therefore the ordinary perturbation lemma can be easily used to prove Kadeishvili's theorem \cite{Kad80,Kad82} on the homotopy transfer of 
$A_{\infty}$ structures (see also \cite{HK,hueainfty,KoSo,KonSoi,transfer,Merk}).\par

If we wants to use the same strategy for $L_{\infty}$-algebras, we have to face
the following problems:\begin{enumerate}

\item the tensor trick breaks down for symmetric 
powers and coalgebra homotopies are not stable under symmetrization,

\item not every $L_{\infty}$-algebra is the symmetrization of an $A_{\infty}$-algebra.

\end{enumerate}
Therefore the proof of the homotopy transfer for $L_{\infty}$-algebras requires either a nontrivial additional work \cite{HS,pertlie,pertshlie} or a different approach, see e.g. \cite{fuka,K} and the arXiv version of \cite{cone}.

The aim of this paper is to show that the homotopy transfer for $L_{\infty}$-algebras (Theorem~\ref{thm.symmcoalgebrapert}) follows easily  from a slight 
modification (Theorem~\ref{thm.relapert}) of the ordinary perturbation lemma  in which we assume that 
$\tilde{d}$ is a differential when restricted to a differential graded subspace $A\subset N$ satisfying suitable properties. 
 
The paper is written in a quite elementary style and we do not assume any knowledge of homological perturbation theory. We only assume that the reader is familiar with the basic properties of graded tensor and graded symmetric coalgebras. The bibliography contains the documents that have been more useful  in the writing of this paper and it is necessarily incomplete; for  more complete references the reader may consult 
\cite{jimmurra,hueainfty}. I apologize in advance for every possible misattribution of previous results.

\bigskip
\section{The category of contractions}

Let $R$ be a fixed commutative ring;  by a differential graded $R$-module
we mean a $\mathbb{Z}$-graded $R$-module $N=\oplus_{i\in \mathbb{Z}}N^i$ together a $R$-linear differential $d_N\colon N\to N$ of degree $+1$. 
 
Given two differential graded $R$-modules
$M,N$ we denote by $\Hom_R^n(M,N)$ the module of $R$-linear maps of degree $n$:
\[ \Hom_R^n(M,N)=\{f\in\Hom_{R}(M,N)\mid f(M_i)\subset N_{i+n},\, \forall\; i\in\Z\}.\]
Notice that $\Hom_R^0(M,N)$ are the morphisms of graded $R$-modules and
\[\{f\in \Hom_R^0(M,N)\mid d_Nf=fd_M\}\] 
is the set of cochain maps (morphisms of differential graded $R$-modules).

\begin{definition}[Eilenberg and Mac Lane {\cite[p. 81]{eilmactw}}]
A \emph{contraction}
is the data
\[ (\xymatrix{M\ar@<.4ex>[r]^\imath&N\ar@<.4ex>[l]^\pi}, h)\]
where $M,N$ are differential graded $R$-modules, $h\in\Hom^{-1}_R(N,N)$ and $\imath,\pi$ are cochain maps such that:
\begin{enumerate}

    \item (deformation retraction) $\;\pi\imath=\Id_{M}$,
    $\;\imath\pi-\operatorname{Id}_{N}=d_{N}h+hd_{N}$,

    \item (annihilation properties) $\;\pi h=h\imath=h^{2}=0$.
\end{enumerate}
\end{definition}

\begin{remark} In the original definition Eilenberg and Mac Lane do not require $h^2=0$; however, if $h$ satisfies the remaining 4 conditions, then $h'=hd_N h$ satisfies also the fifth (cf. \cite[Rem. 2.1]{pertshlie}).\end{remark}

\begin{definition}\label{def.morfismocontrazioni}
A \emph{morphism} of contractions
\[ f\colon
(\xymatrix{M\ar@<.4ex>[r]^\imath&N\ar@<.4ex>[l]^\pi}, h)\to
(\xymatrix{A\ar@<.4ex>[r]^i&B\ar@<.4ex>[l]^p}, k)\]
is a morphism of differential graded $R$-modules $f\colon N\to B$ such that $fh=kf$.
Given a morphism of contractions as above we denote by $\hat{f}\colon M\to A$ the 
morphism of differential graded $R$-modules $\hat{f}=pf\imath$.  
\end{definition}

In the notation of Definition~\ref{def.morfismocontrazioni} it is easy to see that
the diagrams

\[ \xymatrix{M\ar[r]^{\hat{f}}\ar[d]^\imath&A\ar[d]^i\\
N\ar[r]^f&B}\qquad\qquad
\xymatrix{N\ar[r]^{f}\ar[d]^\pi&B\ar[d]^p\\
M\ar[r]^{\hat{f}}&A}
\]
are commutative. In fact 
\[ i\hat{f}=ipf\imath=f\imath+(d_Bkf+kd_Bf)\imath=
f\imath+f(d_Nh+hd_N)\imath=f\imath+f(\imath\pi-\operatorname{Id}_N)\imath=
f\imath,\]
\[ \hat{f}\pi=pf\imath\pi=pf(\operatorname{Id}_N+d_Nh+hd_N)=
pf+p(d_Bk+kd_B)f=pf+p(ip-\operatorname{Id}_B)=pf.\]

\begin{definition}\label{def.composizionecontrazioni}
The \emph{composition} of contractions is defined as
\[ 
(\xymatrix{M\ar@<.4ex>[r]^\imath&N\ar@<.4ex>[l]^\pi}, h)\circ
(\xymatrix{N\ar@<.4ex>[r]^i&P\ar@<.4ex>[l]^p}, k)=
(\xymatrix{M\ar@<.4ex>[r]^{i\imath}&P\ar@<.4ex>[l]^{\pi p}}, k+ihp)
\]
\end{definition}

\bigskip

Given two contractions
$(\xymatrix{M\ar@<.4ex>[r]^\imath&N\ar@<.4ex>[l]^\pi}, h)$ and 
$(\xymatrix{A\ar@<.4ex>[r]^i&B\ar@<.4ex>[l]^p}, k)$ we  define
their tensor product as
\[ (\xymatrix{M\otimes_RA\ar@<.4ex>[r]^{\imath\otimes i}&
N\otimes_RB\ar@<.4ex>[l]^{\pi\otimes p}}, h\ast k),
\qquad h\ast k=\imath\pi\otimes k+h\otimes \operatorname{Id}_B.\]%
It is straightforward to verify that the tensor product of two contractions is a contraction, it is bifunctorial and,  up to the canonical isomorphism $(L\otimes_R M)\otimes_R N\cong L\otimes_R (M\otimes_R N)$, it is associative.\par

Given a contraction $(\xymatrix{M\ar@<.4ex>[r]^\imath&N\ar@<.4ex>[l]^\pi}, h)$, its tensor $n$th power is 

 \[ \tensor^n_R(\xymatrix{M\ar@<.4ex>[r]^\imath&N\ar@<.4ex>[l]^\pi}, h)=
 (\xymatrix{M^{\otimes n}\ar@<.4ex>[r]^{\imath^{\otimes n}}&N^{\otimes n}
 \ar@<.4ex>[l]^{\pi^{\otimes n}}}, T^nh),\]
where
\[ T^nh=\sum_{i=1}^{n}(\imath\pi)^{\otimes i-1}
\otimes h\otimes \operatorname{Id}_N^{\otimes n-i}.\]


The tensor product allows to define naturally the notion of algebra and coalgebra contraction; we consider here only the case of coalgebras.

\begin{definition}\label{def.coalgebracontraction}
Let $N$ be a differential graded coalgebra over a commutative ring $R$  with coproduct $\Delta\colon N\to N\otimes_R N$. 
We shall say that a contraction $(\xymatrix{M\ar@<.4ex>[r]^\imath&N\ar@<.4ex>[l]^\pi}, h)$
is a \emph{coalgebra contraction} if
\[ \Delta \colon (\xymatrix{M\ar@<.4ex>[r]^\imath&N\ar@<.4ex>[l]^\pi}, h)\to
(\xymatrix{M\otimes_RM\ar@<.4ex>[r]^{\imath\otimes \imath}&
N\otimes_RN\ar@<.4ex>[l]^{\pi\otimes \pi}}, h\ast h)\]
is a morphism of contractions. 
\end{definition}

Notice that if $\Delta$ is a morphism of contractions then $\hat{\Delta}$ is a coproduct and 
$\pi,\imath$ are morphisms of differential graded
coalgebras. Conversely,
a contraction $(\xymatrix{M\ar@<.4ex>[r]^\imath&N\ar@<.4ex>[l]^\pi}, h)$ is a coalgebra contraction if $\pi,\imath$ are morphisms of differential graded
coalgebras and
\[ (\imath\pi\otimes h+h\otimes\operatorname{Id}_N)\circ\Delta=
\Delta\circ h.\]%

\begin{example}[tensor trick]\label{exa.tensorcoalgebracontractions}
Given a contraction $(\xymatrix{M\ar@<.4ex>[r]^\imath&N\ar@<.4ex>[l]^\pi}, h)$ of differential graded $R$-modules, we can
consider the \emph{reduced tensor coalgebra} 
\[\bar{T}(N)=\somdir_{n=1}^{\infty}\tensor^n_RN\] with the 
coproduct
\[ \copa(x_1\otimes\cdots\otimes x_n)=\sum_{i=1}^{n-1}
(x_1\otimes\cdots\otimes x_i)\otimes (x_{i+1}\otimes\cdots\otimes x_n).\]
We have seen that there exists a contraction
\[(\xymatrix{\bar{T}(M)\ar@<.4ex>[r]^{T(\imath)}&\bar{T}(N)
\ar@<.4ex>[l]^{T(\pi)}}, Th)\;,\]
where $T(\imath)=\sum \imath^{\otimes n}$, $T(\pi)=\sum \pi^{\otimes n}$ and
$Th=\sum_n T^nh$.\par

We want to prove that $(\xymatrix{\bar{T}(M)\ar@<.4ex>[r]^{T(\imath)}&\bar{T}(N)
\ar@<.4ex>[l]^{T(\pi)}}, Th)$ is a coalgebra contraction, i.e. that
\[ (T(\imath\pi)\otimes Th+Th\otimes\operatorname{Id})\circ\copa=
\copa\circ Th.\]
Let $n$ be a fixed positive integer, writing 
\[ T^nh=\sum_{i=1}^{n}T^n_ih\;,\qquad
T^n_ih=(\imath\pi)^{\otimes i-1}\otimes h\otimes \Id_{N}^{\otimes n-i},\]
for every $i=1,\ldots,n$ we have
\[ \copa \circ T^n_ih=\sum_{j=1}^{i-1}(\imath\pi)^{\otimes j}\otimes T^{n-j}_{i-j}h+
\sum_{j=i}^{n-1} T^{j}_{i}h\otimes \Id_N^{\otimes n-j}.\]
Therefore
\begin{align*}\copa \circ T^nh=
\sum_{i=1}^n\copa \circ T^n_ih
&=\sum_{i=1}^n\sum_{j=1}^{i-1}(\imath\pi)^{\otimes j}\otimes T^{n-j}_{i-j}h+
\sum_{i=1}^n\sum_{j=i}^{n-1} T^{j}_{i}h\otimes \Id_N^{\otimes n-j}\\
&=\sum_{j=1}^{n-1}\sum_{i=j+1}^{n}(\imath\pi)^{\otimes j}\otimes T^{n-j}_{i-j}h+
\sum_{j=1}^{n-1}\sum_{i=1}^{j} T^{j}_{i}h\otimes \Id_N^{\otimes n-j}\\
&=\sum_{j=1}^{n-1}(\imath\pi)^{\otimes j}\otimes (\sum_{i=1}^{n-j} T^{n-j}_{i}h)+
\sum_{j=1}^{n-1}(\sum_{i=1}^{j} T^{j}_{i}h)\otimes \Id_N^{\otimes n-j}\\
&=\sum_{j=1}^{n-1}(\imath\pi)^{\otimes j}\otimes T^{n-j}h+
\sum_{j=1}^{n-1}T^{j}h\otimes \Id_N^{\otimes n-j}.
\end{align*}

It is now sufficient to sum over $n$.\end{example}

\bigskip
\section{Review of ordinary homological perturbation theory}

\textbf{Convention:} \emph{In order to simplify the notation, from now on, and unless otherwise stated, for every 
contraction $(\xymatrix{M\ar@<.4ex>[r]^\imath&N\ar@<.4ex>[l]^\pi}, h)$ we assume that 
$M$ is a submodule of $N$ and $\imath$ the inclusion.} 

\bigskip\par

Given a contraction $(\xymatrix{M\ar@<.4ex>[r]^\imath&N\ar@<.4ex>[l]^\pi}, h)$ of differential graded $R$-modules and a morphism $\de\in \Hom^1_R(N,N)$, the ordinary homological perturbation theory consists is a series of statements about the maps
\begin{equation}\label{equ.inclusioneperturbata}
\imath_{\de}=\sum_{n\ge 0}(h\de)^n\imath\;\in \Hom^0_R(M,N),
\end{equation}

\begin{equation}
\qquad\pi_{\de}=\sum_{n\ge 0}\pi(\de h)^n\;\in \Hom^0_R(N,M),
\end{equation}

\begin{equation}
D_{\de}=\pi\de\imath_{\de}=\pi_{\de}\de\imath\;\in \Hom^1_R(M,M),
\end{equation}


In order to have the above maps defined we need to impose some extra assumption. This may done by considering filtered contractions of complete modules (as in \cite{HK}) or 
by imposing a sort of local nilpotency for the operators $h\de, \de h$.

\begin{definition}\label{def.mezzoperturbation}
Given a contraction  $(\xymatrix{M\ar@<.4ex>[r]^\imath&N\ar@<.4ex>[l]^\pi},h)$  denote
\[ \mathcal{N}(N,h)=\{\de\in \Hom^1_R(N,N)\mid \cup_n\ker((h\de)^n\imath)
=M,\; \cup_n\ker(\pi(\de h)^n)=N\}.\]
\end{definition}

It is plain that the  maps $\imath_{\de}, \pi_{\de}$ and $D_{\de}$ are well defined for every $\de\in\mathcal{N}(N,h)$. Moreover they are functorial in the following sense: given a morphism of contractions
\[ f\colon  (\xymatrix{M\ar@<.4ex>[r]^\imath&N\ar@<.4ex>[l]^\pi},h)
\to (\xymatrix{A\ar@<.4ex>[r]^i&B\ar@<.4ex>[l]^p},k)\]
and two elements $\de\in \mathcal{N}(N,h)$, $\delta\in \mathcal{N}(B,k)$ such that
$f\de=\delta f$ we have 
\[f\imath_{\de}=\sum_{n\ge 0}f(h\de)^n\imath=\sum_{n\ge 0}(k\delta)^nf\imath=
\sum_{n\ge 0}(k\delta)^ni\hat{f}=i_{\delta}\hat{f}.\]
Similarly we have 
$\hat{f}\pi_{\de}=p_{\delta}f$, 
$\hat{f}D_{\de}=D_{\delta}\hat{f}$.

\begin{lemma}\label{lem.injectivity}  Let $(\xymatrix{M\ar@<.4ex>[r]^\imath&N\ar@<.4ex>[l]^\pi},h)$ be a contraction and $\de\in \mathcal{N}(N,h)$. Then $\imath_{\de}$ is injective and
\[ \pi_{\de}\imath_{\de}=\pi\imath=\operatorname{Id}_M.\]
 \end{lemma}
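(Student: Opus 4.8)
The plan is to reduce everything to the three annihilation properties $\pi h=h\imath=h^2=0$; the deformation retraction identity $\imath\pi-\operatorname{Id}_N=d_Nh+hd_N$ plays no role beyond the fact that $\pi\imath=\Id_M$. Throughout, the hypothesis $\de\in\mathcal{N}(N,h)$ guarantees that each of the series $\imath_{\de},\pi_{\de}$ collapses to a finite sum when evaluated on any element, so the rearrangements below are legitimate and no convergence issue arises.

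First I would establish injectivity by exhibiting a left inverse. Composing $\imath_{\de}$ on the left with $\pi$ and using $\pi h=0$, every term $\pi(h\de)^n\imath$ with $n\ge 1$ vanishes, since it begins with $\pi h$. Hence $\pi\imath_{\de}=\pi\imath=\Id_M$, and a map admitting a left inverse is injective.

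For the identity $\pi_{\de}\imath_{\de}=\Id_M$ I would expand the composite as the double sum
\[ \pi_{\de}\imath_{\de}=\sum_{m,n\ge 0}\pi(\de h)^m(h\de)^n\imath \]
and show that only the $(m,n)=(0,0)$ term survives. When $m,n\ge 1$, the factor $(\de h)^m$ ends in $h$ and $(h\de)^n$ begins in $h$, so an $h^2=0$ sits in the middle and the term dies. When $m\ge 1$ and $n=0$, the term $\pi(\de h)^m\imath$ ends in $h\imath=0$. When $m=0$ and $n\ge 1$, the term $\pi(h\de)^n\imath$ begins in $\pi h=0$. The surviving term is $\pi\imath=\Id_M$, which is also exactly the asserted equality with $\pi\imath$.

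Alternatively, and perhaps more transparently, one can exploit the self-referential identities $\imath_{\de}=\imath+h\de\imath_{\de}$ and $\pi_{\de}=\pi+\pi_{\de}\de h$, obtained by reindexing the defining series. From $h^2=\pi h=0$ one deduces $\pi_{\de}h=0$, whence $\pi_{\de}\imath_{\de}=\pi_{\de}\imath$; and from $h\imath=0$ one deduces $\pi_{\de}\imath=\pi\imath=\Id_M$. I expect no genuine obstacle: the entire content is the bookkeeping of the annihilation properties, and the only point deserving a moment of care is checking that the local nilpotency encoded in $\mathcal{N}(N,h)$ really makes the double sum finite on each element before the term-by-term cancellation is invoked.
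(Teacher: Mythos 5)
Your proof is correct and is essentially the paper's: the paper disposes of the lemma as an ``immediate consequence of annihilation properties,'' and your computation --- the left inverse $\pi\imath_{\de}=\pi\imath=\Id_M$ plus the double-sum cancellation in which every term $\pi(\de h)^m(h\de)^n\imath$ with $(m,n)\neq(0,0)$ dies on $\pi h=0$, $h\imath=0$ or $h^2=0$ --- is exactly that bookkeeping, legitimately carried out since each term other than $(0,0)$ vanishes identically as a composition. The only difference worth noting is that the paper additionally records a sharper fact your left-inverse argument does not capture: injectivity of $\imath_{\de}$ holds \emph{without} invoking the annihilation properties, by taking $s$ minimal with $(h\de)^s\imath(x)=0$ and showing $\imath_{\de}(x)=0$ forces $s=0$, hence $\imath(x)=0$; your injectivity proof, by contrast, genuinely uses $\pi h=0$.
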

 
\begin{proof} Immediate consequence of annihilation properties.
It is useful to point out that the proof of the injectivity of $\imath_{\de}$ does not depend on the annihilation properties. 
Assume $\imath_{\de}(x)=0$ and let $s\ge 0$ be the  
minimum integer such that $(h\de)^s\imath(x)=0$. If $s>0$ then 
\[ 0=(h\de)^{s-1}\imath_{\de}(x)=(h\de)^{s-1}\imath(x)+
\sum_{k\ge s}(h\de)^{k}\imath(x)= (h\de)^{s-1}\imath(x)\]
giving a contradiction. Hence $s=0$ and $\imath(x)=0$. 
\end{proof}

\begin{proposition}\label{prop.compositioncompatibility} 
The formula \ref{equ.inclusioneperturbata}  is compatible with composition of contractions. More precisely, if 
\[ (\xymatrix{L\ar@<.4ex>[r]^i&M\ar@<.4ex>[l]^p}, k)\circ
(\xymatrix{M\ar@<.4ex>[r]^\imath&N\ar@<.4ex>[l]^\pi}, h)
=
(\xymatrix{L\ar@<.4ex>[r]^{\imath i}&N\ar@<.4ex>[l]^{p\pi }}, h+\imath k \pi)
\]
then $(\imath i)_{\de}=\imath_{\de} i_{D_{\de}}$, 
provided that all terms of the equation are defined.
\end{proposition}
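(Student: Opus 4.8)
The plan is to reduce the whole statement to a single reorganization of the defining series, writing both sides as locally finite sums of words in the two degree-$1$ operators $A=h\de$ and $B=\imath k\pi\de$, whose sum is exactly the perturbing operator of the composite contraction: by Definition~\ref{def.composizionecontrazioni} the composite has homotopy $H=h+\imath k\pi$ and inclusion $\imath i$, so $H\de=(h+\imath k\pi)\de=A+B$. First I would record the left-hand side. Applying formula~\eqref{equ.inclusioneperturbata} to the composite contraction gives
\[(\imath i)_{\de}=\sum_{n\ge0}(H\de)^n\imath i=\sum_{n\ge0}(A+B)^n\imath i=\sum_{W}W\,\imath i,\]
the last sum ranging over all finite words $W$ in the alphabet $\{A,B\}$.

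Next I would unwind the right-hand side. The single place where the two contractions interact is the identity $\imath kD_{\de}=\imath k\pi\de\,\imath_{\de}=B\imath_{\de}$, which is immediate from $D_{\de}=\pi\de\imath_{\de}$ and $B=\imath k\pi\de$. Feeding this into $i_{D_{\de}}=\sum_{b\ge0}(kD_{\de})^b i$ and arguing by induction on $b$ (with the empty product giving $\imath i$ for $b=0$), I expect to obtain
\[\imath(kD_{\de})^b i=\sum_{c_1,\dots,c_b\ge0}BA^{c_1}BA^{c_2}\cdots BA^{c_b}\,\imath i.\]
Multiplying on the left by $\imath_{\de}=\sum_{a\ge0}A^a\imath$ then yields
\[\imath_{\de}\,i_{D_{\de}}=\sum_{a,b\ge0}\;\sum_{c_1,\dots,c_b\ge0}A^aBA^{c_1}\cdots BA^{c_b}\,\imath i .\]
The tuples $(a;c_1,\dots,c_b)$ are in bijection with finite words in $\{A,B\}$ — split a word into its leading block of $A$'s followed by the blocks $BA^{c_j}$ — so this is again $\sum_W W\imath i$, matching the left-hand side. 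As a conceptual confirmation of this equality one may note that $\imath_{\de}i_{D_{\de}}$ satisfies the same fixed-point relation $x=\imath i+H\de\,x$ as $(\imath i)_{\de}$: indeed $H\de\,\imath_{\de}i_{D_{\de}}=(h\de\imath_{\de})i_{D_{\de}}+\imath k(\pi\de\imath_{\de})i_{D_{\de}}=(\imath_{\de}-\imath)i_{D_{\de}}+\imath(kD_{\de}i_{D_{\de}})=\imath_{\de}i_{D_{\de}}-\imath i$, using $h\de\imath_{\de}=\imath_{\de}-\imath$ and $kD_{\de}i_{D_{\de}}=i_{D_{\de}}-i$.

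The step I expect to carry the real content is the justification that these manipulations are legitimate, namely that all the sums are locally finite and may be reindexed; this is precisely what the hypothesis that every term of the equation is defined is meant to secure. The mechanism is that $B=\imath k\pi\de$ factors through $\imath$ and hence lands in $M$, where the defining property $\cup_n\ker((h\de)^n\imath)=M$ of $\de\in\mathcal{N}(N,h)$ (Definition~\ref{def.mezzoperturbation}) forces each intervening run of $A=h\de$ to terminate after finitely many steps; since $i_{D_{\de}}$ is assumed defined, the number of occurrences of $B$ applied to a fixed element of $L$ is finite as well. Consequently, for each $x\in L$ only finitely many words $W$ contribute to $W\imath i(x)$, the double series for $\imath_{\de}i_{D_{\de}}$ and the single series for $(\imath i)_{\de}$ are two groupings of the same finite sum, and the proposition follows.
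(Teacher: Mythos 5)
Your proof is correct and is essentially the paper's own argument: the paper computes $\imath_{\de}i_{D_{\de}}=\sum_{n\ge0}(h\de)^n\sum_{m\ge0}\bigl(\imath k\pi\de\sum_{s\ge0}(h\de)^s\bigr)^m\imath i=\sum_{n\ge0}(h\de+\imath k\pi\de)^n\imath i$, and that regrouping is exactly the unique-factorization-of-words bijection you make explicit with $A=h\de$, $B=\imath k\pi\de$. Your added fixed-point identity and the local-finiteness justification are elaborations of the same computation, not a different route.
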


\begin{proof} We have
\begin{align*}
\imath_{\de} i_{D_{\de}}=&\sum_{n\ge 0}(h\de)^n\imath 
\sum_{m\ge 0}(kD_{\de})^m i\\
&=\sum_{n\ge 0}(h\de)^n 
\sum_{m\ge 0}\imath(k\pi\de \sum_{s\ge 0}(h\de)^s\imath)^m i\\
&=\sum_{n\ge 0}(h\de)^n 
\sum_{m\ge 0}(\imath k\pi\de \sum_{s\ge 0}(h\de)^s)^m\imath i\\
&=\sum_{n\ge 0}(h\de+\imath k\pi\de )^n\imath i\\
&=(\imath i)_{\de}.
\end{align*}
\end{proof}

\begin{proposition}\label{prop.huebKad}
Let $(\xymatrix{M\ar@<.4ex>[r]^\imath&N\ar@<.4ex>[l]^\pi},h)$ be a coalgebra contraction
and  $\de\in \mathcal{N}(N,h)$. If $\de$ is a coderivation then 
$\imath_{\de}$ and $\pi_{\de}$ are morphisms of graded coalgebras and $D_{\de}$ is a coderivation.
\end{proposition}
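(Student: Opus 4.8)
The plan is to deduce the whole statement from the functoriality of the operators $\imath_{\de},\pi_{\de},D_{\de}$ established just above, applied to the coproduct $\Delta$ regarded as a morphism of contractions into the tensor square. Since $(\xymatrix{M\ar@<.4ex>[r]^\imath&N\ar@<.4ex>[l]^\pi},h)$ is a coalgebra contraction, $\Delta$ is by definition a morphism of contractions from it to the tensor square contraction $(\imath\otimes\imath,\pi\otimes\pi,h\ast h)$, and $\hat\Delta=(\pi\otimes\pi)\Delta\imath$ is the induced coproduct on $M$. On the target I would take the perturbation $\delta=\de\otimes\Id_N+\Id_N\otimes\de$: the hypothesis that $\de$ is a coderivation is exactly the identity $\Delta\de=\delta\Delta$, which is the compatibility needed to invoke functoriality. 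First I would check that $\delta\in\mathcal{N}(N\otimes N,h\ast h)$, so that the perturbed data on the tensor square are defined; this follows once one knows (see below) that $((h\ast h)\delta)^n(\imath\otimes\imath)$ sends a simple tensor $x\otimes y$ to a sum of terms $(h\de)^a\imath x\otimes(h\de)^b\imath y$ with $a+b=n$, all of which vanish for $n$ large.

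Granting this, functoriality yields the three identities
\[ \Delta\imath_{\de}=(\imath\otimes\imath)_{\delta}\,\hat\Delta,\qquad
\hat\Delta\,\pi_{\de}=(\pi\otimes\pi)_{\delta}\,\Delta,\qquad
\hat\Delta\,D_{\de}=D_{\delta}\,\hat\Delta. \]
Hence the statement reduces to the purely tensorial identities
\[ (\imath\otimes\imath)_{\delta}=\imath_{\de}\otimes\imath_{\de},\qquad
(\pi\otimes\pi)_{\delta}=\pi_{\de}\otimes\pi_{\de},\qquad
D_{\delta}=D_{\de}\otimes\Id_M+\Id_M\otimes D_{\de}. \]
Feeding the first into the first functoriality relation says precisely that $\imath_{\de}$ intertwines $\hat\Delta$ and $\Delta$, i.e.\ is a morphism of graded coalgebras; the second does the same for $\pi_{\de}$; the third says that $D_{\de}$ is a coderivation for $\hat\Delta$.

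To prove $(\imath\otimes\imath)_{\delta}=\imath_{\de}\otimes\imath_{\de}$ I would use the recursion $\imath_{\de}=\imath+h\de\imath_{\de}$ together with the two annihilation consequences $h\imath_{\de}=h\imath+h^2\de\imath_{\de}=0$ and $\pi\imath_{\de}=\pi\imath+\pi h\de\imath_{\de}=\Id_M$, which follow at once from $h\imath=h^2=\pi h=0$ (compare Lemma~\ref{lem.injectivity}). Expanding $(h\ast h)\delta=(\imath\pi\otimes h+h\otimes\Id_N)(\de\otimes\Id_N+\Id_N\otimes\de)$ and using $h\imath_{\de}=0$ and $\imath\pi\imath_{\de}=\imath$, a short calculation shows that $\imath_{\de}\otimes\imath_{\de}$ satisfies the same fixed-point equation $X=\imath\otimes\imath+(h\ast h)\delta\,X$ that characterises $(\imath\otimes\imath)_{\delta}$. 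The two solutions agree because $(h\ast h)\delta$ is locally nilpotent: on the finite-dimensional span of the tensors $(h\de)^a\imath x\otimes(h\de)^b\imath y$ it strictly raises the total exponent $a+b$, so the difference of the two solutions, being a fixed point of $(h\ast h)\delta$ there, must vanish. The identity for $\pi_{\de}$ is entirely symmetric, using $\pi_{\de}h=0$ and $\pi_{\de}\imath=\Id_M$, while $D_{\delta}=D_{\de}\otimes\Id_M+\Id_M\otimes D_{\de}$ drops out of $D_{\delta}=(\pi\otimes\pi)\delta(\imath_{\de}\otimes\imath_{\de})$ upon using $\pi\imath_{\de}=\Id_M$ and $\pi\de\imath_{\de}=D_{\de}$.

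The main obstacle is precisely the tensorial identity $(\imath\otimes\imath)_{\delta}=\imath_{\de}\otimes\imath_{\de}$; everything else is either formal functoriality or a one-line consequence of the annihilation relations. Within that identity the delicate points are (i) keeping the Koszul signs correct when commuting the degree $+1$ map $\de$ past the degree $-1$ homotopy $h$ in the four summands of $(h\ast h)\delta$, and (ii) passing from the fixed-point equation to an equality of operators, for which the strict degree-raising (local nilpotency) observation is exactly what makes the perturbation series converge and pins the solution uniquely. Once these are secured, the proposition follows by assembling the three functoriality identities.
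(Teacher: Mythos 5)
Your overall strategy is the paper's own: regard $\Delta$ as a morphism of contractions into the tensor square, perturb the target by $\delta=\de\otimes\Id_N+\Id_N\otimes\de$ (the coderivation hypothesis being exactly $\Delta\de=\delta\Delta$), invoke the functoriality of $\imath_{\de},\pi_{\de},D_{\de}$, and reduce everything to the tensorial identities. For the inclusion side your argument is correct and is essentially a repackaging of the paper's: the paper proves $((h\ast h)\delta)^n(\imath\otimes\imath)=\sum_{i+j=n}(h\de)^i\imath\otimes(h\de)^j\imath$ by induction, using exactly the four-term expansion of $(h\ast h)\delta$ and the annihilation identities that you use to verify the fixed-point equation and the invariance of the span of the elements $(h\de)^a\imath x\otimes(h\de)^b\imath y$; your local-nilpotency uniqueness argument works because only finitely many of these elements are nonzero and $(h\ast h)\delta$ strictly raises $a+b$ on them. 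Your derivation of $D_{\delta}=D_{\de}\otimes\Id_M+\Id_M\otimes D_{\de}$ uses only the inclusion-side identity, so the coderivation statement for $D_{\de}$ is also fine (the paper gets it even more cheaply, writing $D_{\de}=\pi\de\imath_{\de}$ as a composition of two coalgebra morphisms with a coderivation).

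The gap is the sentence claiming the identity for $\pi_{\de}$ is ``entirely symmetric.'' It is not, for two reasons. First, membership $\delta\in\mathcal{N}(N\otimes_RN,h\ast h)$ has two halves, and you only verify the half involving $((h\ast h)\delta)^n(\imath\otimes\imath)$; the half $\cup_n\ker\bigl((\pi\otimes\pi)(\delta(h\ast h))^n\bigr)=N\otimes_RN$, which is what makes $(\pi\otimes\pi)_{\delta}$ exist at all, is never addressed and does not follow from the inclusion-side computation. Second, and more seriously, the uniqueness step genuinely fails on this side. A difference $Y$ of two solutions of $X=\pi\otimes\pi+X\delta(h\ast h)$ satisfies $Y=Y(\delta(h\ast h))^n$ for all $n$; to conclude $Y=0$ you would need the iterates $(\delta(h\ast h))^n$ to vanish locally, but the hypothesis $\de\in\mathcal{N}(N,h)$ controls only the composites $(h\de)^n\imath$ and $\pi(\de h)^n$, not $(\de h)^n$ itself, and when the operator acts on the right there is no invariant finite span of values to run nilpotency on.

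Here is a concrete failure of uniqueness, already in the one-variable analogue $X=\pi+X\de h$ of your equation. Take $M=\K m_0\oplus\K m_1$ with $\deg m_0=0$, $\deg m_1=1$ and zero differential; $N=M\oplus\K x\oplus\K y$ with $\deg x=0$, $\deg y=1$, $d_Nx=y$; $h(y)=-x$ and $h=0$ otherwise; $\de(x)=-y$ and $\de=0$ otherwise. The contraction axioms hold, and $\de\in\mathcal{N}(N,h)$ because $\de\imath=0$ and $\Image(\de h)\subset\K y\subset\ker\pi$. But $\de h(y)=y$, so the map $Y\in\Hom^0_R(N,M)$ with $Y(y)=m_1$ and $Y=0$ on $m_0,m_1,x$ satisfies $Y=Y\de h$; consequently $\pi$ and $\pi+Y$ are two distinct solutions of $X=\pi+X\de h$ (here $\pi_{\de}=\pi$). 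So the right-handed fixed-point equation does not characterize $\pi_{\de}$, and your argument does not establish $(\pi\otimes\pi)_{\delta}=\pi_{\de}\otimes\pi_{\de}$, hence does not prove that $\pi_{\de}$ is a morphism of coalgebras. The identity is nevertheless true: the paper's fix is to prove directly, by induction on $n$ (multiplying on the right by $\delta(h\ast h)$ and using $\pi h=h\imath=h^2=0$), that $(\pi\otimes\pi)(\delta(h\ast h))^n=\sum_{i+j=n}\pi(\de h)^i\otimes\pi(\de h)^j$, which at one stroke yields the missing $\mathcal{N}$-condition and the desired identity. Your proof becomes complete once you replace the symmetry claim by this induction.
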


\begin{proof} Consider the contraction
\[(\xymatrix{M\otimes_RM\ar@<.4ex>[r]^{\imath\otimes\imath}&
N\otimes_RN\ar@<.4ex>[l]^{\pi\otimes\pi}},k), \quad\text{ where }\quad 
k=h*h=\imath\pi\otimes h+h\otimes \Id_N,\]
and $\delta=\de\otimes\Id_N+\Id_N\otimes\de$. In order to prove  that  $\delta\in \mathcal{N}(N\otimes_RN,k)$ we  show that for every integer $n\ge 0$ we have
\[ (k\delta)^n(\imath\otimes\imath)=
\sum_{i+j=n}(h\de)^i\imath\otimes (h\de )^j\imath\;,\qquad
 (\pi\otimes\pi)(\delta k)^n=
\sum_{i+j=n}\pi(\de h)^i\otimes\pi (\de h)^j\;.
\]
We prove here only the first equality by induction on $n$; the second  is completely similar and left to the reader. Since
\[ k\delta=h\de\otimes\Id_N+h\otimes\de-\imath\pi\de\otimes
h+\imath\pi\otimes h\de,\]%
according to annihilation properties we have:
\[h\de\otimes\Id_N\left(\sum_{i+j=n}(h\de )^i\imath\otimes(h\de
)^j\imath\right)=\sum_{i+j=n}(h\de )^{i+1}\imath\otimes(h\de
)^j\imath,\]
\[h\otimes\de\left(\sum_{i+j=n}(h\de )^i\imath\otimes(h\de )^j\imath\right)
=0,\qquad
\imath\pi\de\otimes h\left(\sum_{i+j=n}(h\de )^i\imath\otimes(h\de
)^j\imath\right)=0,\]
\[\imath\pi\otimes h\de\left(\sum_{i+j=n}(h\de )^i\imath\otimes(h\de
)^j\imath\right)=\imath\otimes (h\de )^{n+1}\imath.\]
Therefore
\[ (\imath\otimes\imath)_{\delta}=
\sum_{n\ge 0}(k\delta)^n(\imath\otimes\imath)=
\sum_{i,j\ge 0}(h\de)^i\imath\otimes (h\de )^j\imath=
\imath_{\de}\otimes\imath_{\de}\;,
\]
\[ (\pi\otimes\pi)_{\delta}=
\sum_{n\ge 0}(\pi\otimes\pi)(\delta k)^n=
\sum_{i,j\ge 0}\pi(\de h)^i\otimes \pi(\de h)^j=
\pi_{\de}\otimes\pi_{\de}\;.
\]

Denoting by $\Delta\colon N\to N\otimes_R N$ the coproduct, 
since $\de$ is a coderivation we have $\delta\Delta=\Delta\de$; since $\Delta$ is a morphism of contractions we have by functoriality
\[ \Delta\imath_{\de}=(\imath\otimes\imath)_{\delta}\hat{\Delta}=
(\imath_{\de}\otimes\imath_{\de})\hat{\Delta},\qquad \hat{\Delta}\pi_{\de}=(\pi\otimes\pi)_{\delta}\Delta=
(\pi_{\de}\otimes\pi_{\de})\Delta,\]
and then $\imath_{\de}, \pi_{\de}$ are morphisms of coalgebras.
Finally $D_{\de}$ is a coderivation because it is the composition of the coderivation $\de$ and the two morphisms of coalgebras $\imath_{\de}$ and $\pi$.
\end{proof}

A proof of Proposition~\ref{prop.huebKad} is given in \cite{HK} under the unnecessary assumption that 
$(d+\de)^2=0$.

\begin{definition} Let $N$ be a differential graded $R$-module. 
A \emph{perturbation} of the differential $d_N$ 
is a linear map $\partial\in \Hom^1_R(N,N)$ such that
$(d_N+\partial)^2=0$.
\end{definition}

\begin{theorem}[Ordinary perturbation lemma]\label{thm.pertlemma}
Let
$(\xymatrix{M\ar@<.4ex>[r]^\imath&N\ar@<.4ex>[l]^\pi},h)$ be a contraction  and let $\de\in
\mathcal{N}(N,h)$ be a perturbation  of the differential $d_N$.
Then $D_{\de}$ is a perturbation of
$d_{M}=\pi d_N\imath$ and
\[ \pi_{\de}\colon (N,d_N+\de)\to (M,d_M+D_{\de}),\qquad
\imath_{\de}\colon (M,d_M+D_{\de})\to (N,d_N+\de)\]%
are morphisms of differential graded $R$-modules.
\end{theorem}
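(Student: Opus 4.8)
The plan is to prove first that $\imath_{\de}$ is a morphism of differential graded $R$-modules, to deduce the same for $\pi_{\de}$ by the symmetric argument, and finally to read off that $D_{\de}$ is a perturbation of $d_M$. Throughout I write $d=d_N$ and exploit the two families of identities available: the contraction relations $\imath\pi-\Id_N=dh+hd$, $\pi\imath=\Id_M$, $\pi h=h\imath=h^2=0$, together with the perturbation identity $d\de+\de d+\de^2=0$ coming from $(d+\de)^2=0$. A preliminary observation, obtained from $\pi dh=\pi(\imath\pi-\Id_N-hd)=0$, is that $\pi d\,\imath_{\de}=\pi d\imath=d_M$, whence $d_M+D_{\de}=\pi(d+\de)\imath_{\de}$; I will use this to settle the perturbation claim at the end.

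For the morphism property of $\imath_{\de}$ I would start from the recursion $\imath_{\de}=\imath+h\de\,\imath_{\de}$, immediate from the geometric series defining $\imath_{\de}$, and compute $(d+\de)\imath_{\de}$. Expanding $(d+\de)\imath=\imath d_M+\de\imath$ and $(d+\de)h\de\,\imath_{\de}$, then substituting $dh=\imath\pi-\Id_N-hd$ and $d\de=-\de d-\de^2$ and using $\pi\de\,\imath_{\de}=D_{\de}$, the annihilation properties make several terms collapse; in particular $\de\imath-\de\,\imath_{\de}+\de h\de\,\imath_{\de}=0$ again by the recursion. The outcome I expect is the clean identity
\[(d+\de)\imath_{\de}=\imath(d_M+D_{\de})+h\de\,(d+\de)\imath_{\de}.\]
Subtracting the analogous expansion $\imath_{\de}(d_M+D_{\de})=\imath(d_M+D_{\de})+h\de\,\imath_{\de}(d_M+D_{\de})$ and setting $E=(d+\de)\imath_{\de}-\imath_{\de}(d_M+D_{\de})$, the two leading terms cancel and one is left with the self-referential relation $E=h\de\,E$, hence $E=(h\de)^nE$ for every $n\ge 0$.

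To finish I would invoke the hypothesis $\de\in\mathcal{N}(N,h)$ exactly in the spirit of Lemma~\ref{lem.injectivity}: since $E=(h\de)^nE$ for all $n$, it suffices to know that each element in $\Image E$ is annihilated by a sufficiently high power of $h\de$, and then the minimal-exponent argument forces $E=0$. This gives $(d+\de)\imath_{\de}=\imath_{\de}(d_M+D_{\de})$, so $\imath_{\de}$ is a morphism. The identical computation with the roles of left and right interchanged, based on the recursion $\pi_{\de}=\pi+\pi_{\de}\de h$, on $D_{\de}=\pi_{\de}\de\imath$, and on the second family in $\mathcal{N}(N,h)$, yields $(d_M+D_{\de})\pi_{\de}=\pi_{\de}(d+\de)$, so $\pi_{\de}$ is a morphism. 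The perturbation claim then drops out: using $d_M+D_{\de}=\pi(d+\de)\imath_{\de}$ together with the morphism identity just proved,
\[(d_M+D_{\de})^2=\pi(d+\de)\,\imath_{\de}(d_M+D_{\de})=\pi(d+\de)^2\imath_{\de}=0.\]

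I expect the main obstacle to be twofold. The bookkeeping in the core computation is delicate: one must apply $dh=\imath\pi-\Id_N-hd$ and the perturbation identity in the right order and check that the annihilation properties $\pi h=h\imath=h^2=0$ really do kill the spurious terms, so that everything funnels into the single operator $h\de$ in front of $E$. The genuinely substantive point, however, is the passage from $E=h\de\,E$ to $E=0$: this is where the local nilpotence encoded in $\mathcal{N}(N,h)$ is indispensable, and the care required is to see that the relevant elements lie in the range where that nilpotence applies, so that the conclusion follows by the minimal-exponent trick rather than by a formal inversion of $\Id_N-h\de$.
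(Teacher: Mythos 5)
Your reduction of the theorem to the vanishing of $E=(d+\de)\imath_{\de}-\imath_{\de}(d_M+D_{\de})$ is correct as far as it goes: the identity $(d+\de)\imath_{\de}=\imath(d_M+D_{\de})+h\de\,(d+\de)\imath_{\de}$ does check out, hence so does $E=h\de E$, and your closing deduction of $(d_M+D_{\de})^2=0$ from the chain-map property of $\imath_{\de}$ is fine. The gap is exactly at the point you flag as the substantive one, and it cannot be closed under the stated hypotheses. To run the minimal-exponent argument on $E(x)$ you need $E(x)\in\bigcup_n\ker\bigl((h\de)^n\bigr)$, but $\de\in\mathcal{N}(N,h)$ only says that powers of $h\de$ eventually kill elements of the form $\imath(y)$, $y\in M$, and that $\pi(\de h)^n$ eventually kills each element of $N$. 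Now $E(x)$ is the difference of $\imath_{\de}(d_M+D_{\de})(x)$ --- a finite sum of terms $(h\de)^k\imath(y)$ with $y\in M$, which indeed lies in $\bigcup_n\ker\bigl((h\de)^n\bigr)$ --- and $(d+\de)\imath_{\de}(x)$, which is a finite sum of terms $d(h\de)^k\imath(x)$ and $\de(h\de)^k\imath(x)$. Nothing in the definition of $\mathcal{N}(N,h)$ controls powers of $h\de$ applied to these latter terms, so iterating $E=(h\de)^nE$ just reproduces the same unknown quantity and gives no vanishing. The symmetric argument for $\pi_{\de}$ has the mirror-image defect: from $E'=E'\de h$ you would need $\pi_{\de}(d+\de)(\de h)^n y$ to vanish for $n$ large, which again is not implied. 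Your scheme does become a complete proof under the stronger hypothesis $\bigcup_n\ker\bigl((h\de)^n\bigr)=N$, where $\Id_N-h\de$ is injective on all of $N$ (this is precisely the hypothesis the paper invokes in the remark after the theorem, for the gauge-fixing uniqueness of $\imath_{\de}$), but the theorem is asserted for the weaker class $\mathcal{N}(N,h)$.

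The repair is to avoid the self-referential relation and instead track the error term exactly, which is what the paper does: it deduces Theorem~\ref{thm.pertlemma} as the case $A=N$ of the relative perturbation lemma (Theorem~\ref{thm.relapert}), with Remark~\ref{rem.cpl} handling $\pi_{\de}$. Concretely, in your expansion of $(d+\de)\imath_{\de}$ do not use $d\de=-\de d-\de^2$ to fold the defect back into $h\de\,(d+\de)\imath_{\de}$; keep $\psi=\de^2+d\de+\de d$ as an explicit operator and prove the identity
\[ \imath_{\de}(d_M+D_{\de})-(d+\de)\imath_{\de}=\sum_{n\ge 0}h(\de h)^n\psi\,\imath_{\de}, \]
which is an equality of finite sums on each element and holds for every $\de\in\mathcal{N}(N,h)$, perturbation or not. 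When $\de$ is a perturbation, $\psi=0$ and the right-hand side vanishes term by term, with no injectivity or local-nilpotency argument required; the analogous exact computation gives $(d_M+D_{\de})\pi_{\de}=\pi_{\de}(d+\de)$. Your final step showing that $D_{\de}$ is a perturbation of $d_M$ then goes through verbatim.
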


\begin{proof} See \cite{HK,jimmurra} and references therein for proofs and history. We prove again this result in Remark~\ref{rem.cpl} as a particular case of  the relative perturbation lemma.
\end{proof}

\begin{remark} If  $\cup_n\ker(h\de)^n=N$, and $\de$ is a perturbation of $d_N$,
then $\imath_{\de}$ is the unique morphism of  graded $R$-modules 
$M\to N$ whose image is a subcomplex of $(N,d_N+\de)$ and satisfying the ``gauge fixing'' condition
\[ h\imath_{\de}=0,\qquad \pi\imath_{\de}=\operatorname{Id}_M.\]
In fact $h(d_N+\de)\imath_{\de}=0$
and then
\begin{align*}
\imath_{\de}=&\imath_{\de}+hd_N\imath_{\de}+h\de\imath_{\de}=(\imath\pi-d_Nh)\imath_{\de}+
h\de\imath_{\de}=\imath+(h\de)\imath_{\de}\\
=&(\operatorname{Id}_N-h\de)^{-1}\imath.
\end{align*}
Similarly $\pi_{\de}$ is the unique morphism of  graded $R$-modules 
$M\to N$ whose kernel is a subcomplex of $(N,d_N+\de)$ and satisfying
\[ \pi_{\de}h=0,\qquad \pi_{\de}\imath=\operatorname{Id}_M.\]
\end{remark}

The coalgebra perturbation lemma cited in the abstract is obtained by putting together 
Proposition~\ref{prop.huebKad} and Theorem~\ref{thm.pertlemma}.

\medskip
\section{The relative  perturbation lemma}
\label{sec.relativepertlemma}

\begin{definition}\label{def.relativeperturbation}
Let $N$ be a differential graded $R$-module and $A\subset N$ a
differential graded  submodule. A morphism $\de\in \Hom^1_R(N,N)$ is called a \emph{perturbation of $d_N$ over $A$} if
\[ \de
(A)\subset A\quad\text{and}\quad (d_N+\de)^2(A)=0.\]
\end{definition}

\begin{remark} The meaning of Definition~\ref{def.relativeperturbation} becomes more clear when we impose some extra assumption on $\de$. For instance, if $N$ is a differential graded coalgebra and $\de$ is a coderivation, then in general does not exist any coderivation $\delta$ of $N$ such that $\delta_{|A}=\de_{|A}$ and 
$(d_N+\delta)^2=0$. An explicit example of this phenomenon will be described in 
Section~\ref{sec.coderivation}.\end{remark}

\begin{theorem}[Relative perturbation lemma]\label{thm.relapert}
Let $(\xymatrix{M\ar@<.4ex>[r]^\imath&N\ar@<.4ex>[l]^\pi},h)$ be a contraction with $M\subset N$ and $\imath$ the inclusion. Let $A\subset N$ be a differential graded submodule 
and $\de\in \mathcal{N}(N,h)$  a perturbation of $d_N$
over $A$. Assume moreover that:
\begin{enumerate}

\item $\pi(A)\subset A\cap M$.

\item $\imath_{\de}(A\cap M)\subset A$.
\end{enumerate}
Then
\[ D_{\de}=\sum_{n\ge 0}\pi\de (h\de)^n\imath=\sum_{n\ge 0}\pi(\de
h)^n\de\imath\in \Hom^1_R(M,M),\]
is a perturbation of $d_M$ over $A\cap M$ and
\[ \imath_{\de}=\sum_{n\ge 0}(h\de)^n\imath\colon (A\cap M,d_M+D_{\de})\to (A,d_N+\de)\]%
is a morphisms of differential graded $R$-modules.
\end{theorem}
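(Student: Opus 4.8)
Throughout I write $\tilde{d}=d_N+\de$ for the perturbed differential on $N$ and $\tilde{D}=d_M+D_{\de}$ for the candidate differential on $M$, and I recall the conventions $\imath\pi-\Id_N=d_Nh+hd_N$ and $\pi\imath=\Id_M$, $\pi h=h\imath=h^2=0$. The plan is to split the statement into three parts: stability of $A\cap M$ under $\tilde{D}$, the chain-map property of $\imath_{\de}$ on $A\cap M$, and the vanishing $\tilde{D}^2=0$ on $A\cap M$; the chain-map property is the real content and everything else is formal. First I would record the purely ``global'' identities that need no reference to $A$: from $h\imath=0$ and $h^2=0$ one gets $h\imath_{\de}=0$; from $\pi h=0$ one gets $\pi\imath_{\de}=\Id_M$ (so $\imath_{\de}=\imath+h\de\imath_{\de}$); from $\pi d_Nh=\pi\imath\pi-\pi-\pi hd_N=0$ one deduces $\pi d_N\imath_{\de}=\pi d_N\imath=d_M$, hence the clean formula $\tilde{D}=\pi\tilde{d}\imath_{\de}$; and finally, combining $h\de\imath_{\de}=\imath_{\de}-\imath$ with $hd_N\imath_{\de}=(\imath\pi-\Id_N-d_Nh)\imath_{\de}=\imath-\imath_{\de}$, one obtains the key relation $h\tilde{d}\imath_{\de}=0$.

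For Step 1 (stability) I use that $A$ is a differential graded submodule, so $d_N(A)\subset A$ and, together with $\de(A)\subset A$, also $\tilde{d}(A)\subset A$. For $x\in A\cap M$ hypothesis (2) gives $\imath_{\de}x\in A$, whence $\de\imath_{\de}x\in A$ and, by hypothesis (1), $D_{\de}x=\pi\de\imath_{\de}x\in\pi(A)\subset A\cap M$; likewise $d_Mx=\pi d_Nx\in\pi(A)\subset A\cap M$. Thus $\tilde{D}(A\cap M)\subset A\cap M$, which lets me iterate the chain-map identity in the final step.

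The crux is Step 2: for $x\in A\cap M$ I must show $\tilde{d}\imath_{\de}x=\imath_{\de}\tilde{D}x$, equivalently that the defect $w:=(\Id_N-\imath_{\de}\pi)\tilde{d}\imath_{\de}x$ vanishes. Using $\pi\imath_{\de}=\Id_M$ and $h\imath_{\de}=0$ one checks directly that $\pi w=0$ and $hw=0$; using the global relation $h\tilde{d}\imath_{\de}=0$ one gets $h\tilde{d}w=0$. Substituting $\pi w=hw=0$ into $h\tilde{d}=\imath\pi-\Id_N-d_Nh+h\de$ yields $h\tilde{d}w=-w+h\de w$, so the three facts force $w=h\de w$, i.e. $w=(h\de)^nw$ for every $n$. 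The main obstacle is to conclude $w=0$ from this: this is exactly the gauge-fixing/uniqueness phenomenon behind $\imath_{\de}=(\Id_N-h\de)^{-1}\imath$, and it is here that the hypothesis $\de\in\mathcal{N}(N,h)$ and the fact $w\in A$ (note $w=\tilde{d}\imath_{\de}x-\imath_{\de}\tilde{D}x$ with both terms in $A$, since $\imath_{\de}(A\cap M)\subset A$ and $\tilde{d}(A)\subset A$) must be brought to bear, running the minimal-power argument of Lemma~\ref{lem.injectivity} on the relation $w=(h\de)^nw$ to kill $w$. I expect this vanishing to be the only delicate point of the whole proof; the use of $(d_N+\de)^2(A)=0$ is not needed here, only the contraction identities and local nilpotency.

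Finally, Step 3 is immediate from Steps 1 and 2. On $A\cap M$ we have $\tilde{d}\imath_{\de}=\imath_{\de}\tilde{D}$ and $\tilde{D}(A\cap M)\subset A\cap M$, so iterating gives $\tilde{d}^2\imath_{\de}=\tilde{d}\imath_{\de}\tilde{D}=\imath_{\de}\tilde{D}^2$ on $A\cap M$. Since $\imath_{\de}(A\cap M)\subset A$ and $\de$ is a perturbation of $d_N$ over $A$, the left-hand side is $\tilde{d}^2\imath_{\de}x=0$; as $\tilde{D}^2x\in A\cap M\subset M$ and $\imath_{\de}$ is injective by Lemma~\ref{lem.injectivity}, we conclude $\tilde{D}^2=0$ on $A\cap M$. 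This shows $D_{\de}$ is a perturbation of $d_M$ over $A\cap M$ and that $\imath_{\de}\colon(A\cap M,\tilde{D})\to(A,\tilde{d})$ is a morphism of differential graded $R$-modules.
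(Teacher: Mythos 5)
Your Steps~0 and~1 are correct, and your Step~3 is exactly the paper's own closing argument ($\pi h=0$ gives $\pi\imath_{\de}=\pi\imath$, hence $\imath_{\de}$ injective, and then $\imath_{\de}(d_M+D_{\de})^2x=(d_N+\de)^2\imath_{\de}x=0$). Both problems sit in Step~2. The first is a misattribution that is not cosmetic: writing $\tilde d=d_N+\de$, $\tilde D=d_M+D_{\de}$ as you do, the relation $h\tilde dw=0$ does \emph{not} follow from $h\tilde d\imath_{\de}=0$ alone. Expanding, $h\tilde dw=h\tilde d^{\,2}\imath_{\de}x-h\tilde d\imath_{\de}(\pi\tilde d\imath_{\de}x)$; the second term dies by the operator identity $h\tilde d\imath_{\de}=0$, but the first equals $h\psi\imath_{\de}x$ with $\psi=\de^2+d_N\de+\de d_N$, and it vanishes precisely because $\imath_{\de}x\in A$ (hypothesis (2)) and $(d_N+\de)^2(A)=0$. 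So your closing claim that ``the use of $(d_N+\de)^2(A)=0$ is not needed'' cannot stand: the paper's computation shows that your defect is exactly $w=-\sum_{n\ge 0}h(\de h)^n\psi\imath_{\de}x$, so without the perturbation hypothesis the chain-map property is simply false, and any proof of Step~2 that never invokes that hypothesis must contain an error.

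The second problem is the fatal one: even granting $w=(h\de)^nw$ for all $n$, you cannot conclude $w=0$ from $\de\in\mathcal{N}(N,h)$. That condition gives local nilpotency only of the operators $(h\de)^n\imath$ on elements of $M$ and of $\pi(\de h)^n$ on $N$; it says nothing about $(h\de)^n$ applied to a general element of $N$. The minimal-power argument of Lemma~\ref{lem.injectivity} works for $\imath_{\de}(y)=\sum_k(h\de)^k\imath(y)$ because the tail $(h\de)^k\imath(y)$, $k\gg 0$, vanishes for $y\in M$; your $w=\tilde d\imath_{\de}x-\imath_{\de}\pi\tilde d\imath_{\de}x$ admits no such presentation, and neither $w\in A$ nor the hypotheses on $A$ help, since the theorem does not assume $h(A)\subset A$ (the paper stresses exactly this point), let alone any nilpotency of $h\de$ on $A$. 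Indeed nonzero fixed points of $h\de$ are perfectly compatible with $\de\in\mathcal{N}(N,h)$: take $M=0$ and $N$ spanned by $e$ in degree $0$ and $f$ in degree $1$, with $d_Ne=f$, $hf=-e$, $\de e=-f$; then $\mathcal{N}(N,h)=\Hom^1_R(N,N)$ while $h\de e=e$. The uniqueness (``gauge-fixing'') argument you are invoking is the one in the paper's unnumbered Remark following Theorem~\ref{thm.pertlemma}, and it requires the strictly stronger hypothesis $\cup_n\ker(h\de)^n=N$. The paper avoids it altogether by a direct computation: using the auxiliary Formula~\eqref{equ.l4} it establishes the operator identity $\imath_{\de}(d_M+D_{\de})-(d_N+\de)\imath_{\de}=\sum_{n\ge 0}h(\de h)^n\psi\imath_{\de}$, and then kills the right-hand side on $A\cap M$ via $\psi\imath_{\de}(A\cap M)\subset\psi(A)=0$. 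To repair your proof you must replace the abstract vanishing argument by this explicit computation of $w$ (or strengthen the nilpotency hypothesis, which would weaken the theorem and break the application to Theorem~\ref{thm.symmcoalgebrapert}).
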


\begin{remark} It is important to point out that 
we do not require that $h(A)\subset A$ but only the weaker assumption 
$\imath_{\de}(M\cap A)\subset A$.\end{remark}

\begin{proof} We first note that 
$D_{\de}=\pi\de\imath_{\de}$ and then 
$D_{\de}(A\cap M)\subset A\cap M$. In order to simplify the notation we denote $d=d_N$ and $I=Id_N$.
 Setting $\psi=\de^2+d\de+\de d\in \Hom^2_R(N,N)$ we have the formula
\begin{equation}\label{equ.l4}
 \sum_{n,m\ge 0}(\de h)^n\de\imath\pi\de(h\de)^m=
\sum_{n,m\ge 0}(\de h)^n\psi(h\de)^m
-\sum_{m\ge 0} d\de(h\de)^m-\sum_{n\ge 0}(\de h)^n\de d.
\end{equation}
In fact, since $\imath\pi=I+hd+dh$, we have
\[ \de\imath\pi\de=\de(I+hd+dh)\de=\de^2+\de hd\de+\de dh\de=
\psi-(I-\de h)d\de-\de d(I-h\de)\]
and therefore
\begin{align*}
\sum_{n,m\ge 0}&(\de h)^n\de\imath\pi\de(h\de)^m\\
=&\sum_{n,m\ge 0}(\de
h)^n\psi(h\de)^m-\sum_{n,m\ge 0}(\de h)^n(I-\de h)d\de(h\de)^m
 -\sum_{n,m\ge 0}(\de h)^n\de d(I-h\de)(h\de)^m\\
 =&\sum_{n,m\ge 0}(\de
h)^n\psi(h\de)^m-\sum_{m\ge 0}d\de(h\de)^m
 -\sum_{n\ge 0}(\de h)^n\de d\;.
\end{align*}
We have
\begin{align*}
(d+\de)\imath_{\de}=&\sum_{m\ge
0}d(h\de)^m\imath+\sum_{m\ge 0}\de(h\de)^m\imath
=d\imath+\sum_{m\ge
0}dh\de(h\de)^m\imath+\sum_{m\ge 0}\de(h\de)^m\imath\\
=&d\imath +\sum_{m\ge 0}(I+dh)\de(h\de)^m\imath
=d\imath+\sum_{m\ge 0}(\imath\pi-hd)\de(h\de)^m\imath\;,
\end{align*}

\begin{align*}
\imath_{\de}(d_M+D_{\de})\!=&
\sum_{n\ge 0}(h\de)^n\imath d_M+\sum_{n,m\ge 0}(h\de)^n\imath\pi\de(h\de)^m\imath\\
=&\sum_{n\ge 0}(h\de)^n\imath d_M+\!
\sum_{m\ge 0}\imath\pi\de(h\de)^m\imath+
h\sum_{n,m\ge 0}(\de h)^n\de\imath\pi\de(h\de)^m\imath\qquad\qquad\\
=&\sum_{n\ge 0}(h\de)^nd\imath+\!\sum_{m\ge 0}\imath\pi\de(h\de)^m\imath
+\!\sum_{n\ge 0}h(\de h)^n\psi\imath_{\de}
-\!\sum_{m\ge 0} hd\de(h\de)^m\imath-\!\sum_{n\ge 0}h(\de h)^n\de d\imath\\
=&d\imath+\sum_{m\ge 0}(\imath\pi-hd)\de(h\de)^m\imath
+\sum_{n\ge 0}h(\de h)^n\psi\imath_{\de},
\end{align*}
and therefore
\[ \imath_{\de}(d_M+D_{\de})-(d+\de)\imath_{\de}=\sum_{n\ge 0}h(\de h)^n\psi\imath_{\de}.\]
In particular,
for every $x\in M\cap A$ we have $\psi\imath_{\de}(x)=0$ and then
\[ \imath_{\de}(d_M+D_{\de})(x)=(d+\de)\imath_{\de}(x).\]
Now we prove that $D_{\de}$ is perturbation of $d_M$ over $M\cap A$, i.e. that
$(d_M+D_{\de})^2x=0$
for every $x\in M\cap A$. Since $\pi h=0$ we have $\pi\imath_{\de}=\pi\imath$ and then
$\imath_{\de}$ is injective.
If $x\in M\cap A$ we have
\[  \imath_{\de}(d_M+D_{\de})^2x=(d+\de)\imath_{\de}(d_M+D_{\de})x=
(d+\de)^2\imath_{\de}x=0.\]
\end{proof}

\begin{remark}\label{rem.cpl} In the set-up of Theorem~\ref{thm.relapert},  if $h(A)\subset A$ then also $\pi_{\de}\colon (A,d+\de)\to (A\cap M, d_M+D_{\de})$ is a morphism of differential graded $R$-modules. In fact, under this additional assumption we have
\[ \pi_{\de}(A)=\sum_{n\ge0} \pi(\de h)^n(A)\subset A\cap M,\qquad 
\sum_{n,m\ge 0}(\de h)^n\psi(h\de)^m h(A)=0,\]
and therefore in $A$ the following equalities hold: 
 
\begin{multline*}
\pi_{\de}(d+\de)=\sum_{n\ge 0}\pi (\de h)^n d+\sum_{n\ge 0}\pi
(\de h)^n\de=\pi d+\sum_{n\ge 0}\pi (\de h)^n\de hd+\sum_{n\ge
0}\pi (\de h)^n\de\\
=\pi d+\sum_{n\ge 0}\pi (\de h)^n\de(I+hd)= \pi d+\sum_{n\ge 0}\pi
(\de h)^n\de(\imath\pi-dh).
\end{multline*}

\begin{align*}
(d+D_{\de})\pi_{\de}=&\sum_{n\ge 0}\pi d(\de h)^n+ \sum_{n,m\ge 0}\pi(\de
h)^n\de\imath\pi(\de h)^m\\
=&\sum_{n\ge 0}\pi d(\de h)^n+ \sum_{n\ge 0}\pi(\de
h)^n\de\imath\pi +\sum_{n\ge 0,m\ge 1}\pi(\de
h)^n\de\imath\pi(\de h)^m\\
=&\sum_{n\ge 0}\pi d(\de h)^n+ \sum_{n\ge 0}\pi(\de
h)^n\de\imath\pi +\sum_{n,m\ge 0}\pi(\de
h)^n\de\imath\pi\de (h\de)^mh\\
=&\sum_{n\ge 0}\pi d(\de h)^n+ \sum_{n\ge 0}\pi(\de
h)^n\de\imath\pi - \sum_{m\ge 0}\pi d\de(h\de)^mh
 -\sum_{n\ge 0}\pi (\de h)^n\de dh\\
=&\left(\sum_{n\ge 0}\pi d(\de h)^n- \sum_{m\ge 0}\pi
d\de(h\de)^mh\right)+\sum_{n\ge 0}\pi(\de h)^n\de\imath\pi
 -\sum_{n\ge 0}\pi (\de h)^n\de dh\\
=& \pi d+\sum_{n\ge 0}\pi (\de h)^n\de(\imath\pi-dh).
\end{align*}
\end{remark}

\begin{remark} It is straightforward to verify that all the previous proofs also work for 
the weaker notion of contraction where the condition $\pi\imath=Id_M$ is replaced with 
\emph{$\imath$ is injective and $\imath(M)$ is a direct summand of $N$ as graded $R$-module.}
\end{remark}

\bigskip
\section{Review of reduced symmetric coalgebras and their coderivations}
\label{sec.coderivation}

From now on we assume that $R=\K$ is a field of characteristic 0. Given a graded vector space
$V$, the \emph{twist map}
\[\twist\colon V\otimes V\to V\otimes V,\qquad 
\twist(v\otimes w)=(-1)^{\deg(v)\deg(w)}w\otimes v,\] 
extends naturally to an action of the symmetric group $\Sigma_n$ on the tensor product
$\bigotimes^nV$:
\[ \sigma_{\twist}(v_{1}\otimes\cdots\otimes v_{n})=
\pm
\;v_{\sigma^{-1}(1)}\otimes\cdots\otimes v_{\sigma^{-1}(n)},\qquad \sigma\in \Sigma_n.\]
We will denote by $\bigodot^nV=(\bigotimes^nV)^{\Sigma_n}$ the subspace of invariant tensors. Notice that if $W\subset V$ is a graded subspace, then $\bigodot^nW=\bigodot^nV\cap \bigotimes^nW$.
It is  easy to see that the subspace
\[ \bar{S}(V)=\somdir_{n=1}^{\infty}\symmetric^nV\subset 
\somdir_{n=1}^{\infty}\tensor^nV=\bar{T}(V)\]
is a graded subcoalgebra, called the \emph{reduced symmetric coalgebra} generated by $V$.
Let's denote by  $p\colon \bar{T}(V)\to V$ the projection; we will also denote by 
$p\colon \bar{S}(V)\to V$ the restriction of the projection to symmetric tensors. The following well known properties hold (for proofs see e.g. \cite{defomanifolds}):
\begin{enumerate}

\item Given a morphism of graded coalgebras $F\colon \bar{T}(V)\to \bar{T}(W)$ we have
$F(\bar{S}(V))\subset \bar{S}(W)$.

\item  Given a morphism of graded vector spaces $f\colon \bar{T}(V)\to W$
there exists an unique morphism of graded coalgebras $F\colon \bar{T}(V)\to \bar{T}(W)$ such that $f=pF$.

\item  Given a morphism of graded vector spaces $f\colon \bar{S}(V)\to W$
there exists an unique morphism of graded coalgebras $F\colon \bar{S}(V)\to \bar{S}(W)$ such that $f=pF$.
\end{enumerate}

Similar results hold for coderivations. More precisely for
every map $q\in\Hom^k(\bar{T}(V),V)$ there exists an unique coderivation
$Q\colon \bar{T}(V)\to \bar{T}(V)$ of degree $k$ such that $q=pQ$. The coderivation $Q$ is given by the explicit formula
\begin{equation}\label{equ.coder}
Q(a_{1}\otimes\cdots\otimes a_{n})
=\sum_{l=1}^{n}\sum_{i=0}^{n-l}(-1)^{k(\bar{a_{1}}+\cdots+\bar{a_{i}})}
a_{1}\otimes\cdots\otimes a_{i}\otimes
q(a_{i+1}\otimes\cdots\otimes a_{i+l})\otimes\cdots\otimes a_{n}\;,
\end{equation}
where $\bar{a_i}=\deg(a_i)$. Moreover $Q(\bar{S}(V))\subset \bar{S}(V)$ and the restriction
of $Q$ to $\bar{S}(V)$ depends only on the restriction of $q$ on $\bar{S}(V)$. In particular every coderivation of $\bar{S}(V)$ extends to a coderivation of $\bar{T}(V)$.

\begin{definition} A coderivation $Q$ of degree $+1$ is called a \emph{codifferential} if 
$Q^2=0$.\end{definition}

\begin{lemma} A coderivation $Q$ of degree $+1$ is a codifferential if and only if
$pQ^2=0$.\end{lemma}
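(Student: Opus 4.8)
The plan is to reduce everything to the uniqueness part of the correspondence between coderivations and their projections to $V$ recalled just above. The ``only if'' implication is immediate, since $Q^2=0$ trivially gives $pQ^2=0$; so all the content lies in the converse.

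First I would observe that $Q^2$ is itself a coderivation, of degree $+2$. Writing $\Delta$ for the coproduct and using that $Q$ is a coderivation, we have $\Delta Q=(Q\otimes\Id+\Id\otimes Q)\Delta$, where $\Id\otimes Q$ carries the Koszul sign $(-1)^{\deg(Q)\deg(-)}$ coming from the left factor. Setting $\tau=Q\otimes\Id+\Id\otimes Q$, this reads $\Delta Q=\tau\Delta$, whence $\Delta Q^2=\tau^2\Delta$. Expanding $\tau^2$ produces the two mixed terms $(Q\otimes\Id)(\Id\otimes Q)$ and $(\Id\otimes Q)(Q\otimes\Id)$; because $\deg(Q)=+1$ is odd, a direct sign check shows that these two terms differ only by a sign and therefore cancel, leaving $\tau^2=Q^2\otimes\Id+\Id\otimes Q^2$. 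Thus $\Delta Q^2=(Q^2\otimes\Id+\Id\otimes Q^2)\Delta$, which is exactly the coderivation property for $Q^2$. Equivalently, in characteristic $0$ one may note that $2Q^2=[Q,Q]$ is the graded commutator of coderivations, hence again a coderivation.

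Once $Q^2$ is known to be a coderivation of degree $+2$, I would invoke the uniqueness statement recalled before the lemma: the assignment sending a coderivation $R$ to $pR\in\Hom^{\deg(R)}(\bar{T}(V),V)$ is injective, since a coderivation is completely determined by its projection to $V$. The hypothesis $pQ^2=0$ therefore forces $Q^2$ to coincide with the unique coderivation of degree $+2$ whose projection to $V$ vanishes, namely the zero coderivation; hence $Q^2=0$ and $Q$ is a codifferential. The same argument applies verbatim whether $Q$ is a coderivation of $\bar{T}(V)$ or of $\bar{S}(V)$, since the uniqueness property was stated in both cases.

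I expect the only genuine point to verify is the sign cancellation of the two mixed terms in $\tau^2$, which is precisely where the oddness of $\deg(Q)$ enters; everything else is a formal consequence of the already-established correspondence between coderivations and linear maps into $V$.
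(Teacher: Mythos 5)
Your proposal is correct and follows essentially the same route as the paper: the paper also shows that $Q^2=[Q,Q]/2$ is a coderivation (your direct Koszul-sign computation is just this fact unwound) and then concludes from the injectivity of the correspondence $R\mapsto pR$ that $pQ^2=0$ forces $Q^2=0$. Your version is slightly more explicit about the final uniqueness step, which the paper leaves implicit, but there is no substantive difference.
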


\begin{proof} The space of coderivations of a graded coalgebra is closed under the bracket
\[ [Q,R]=QR-(-1)^{\deg(Q)\deg(R)}RQ\]
and therefore if $Q$ is a coderivation of odd degree, then its square $Q^2=[Q,Q]/2$ is again a coderivation.\end{proof}

Every codifferential on $\bar{T}(V)$ induces by restriction a codifferential on
$\bar{S}(V)$. Conversely it is generally false that a codifferential on 
$\bar{S}(V)$ extends to a codifferential on $\bar{T}(V)$. This is well known to experts; however we will give here
an example of this phenomenon for the lack of suitable references.\par

We restrict our attention to graded vector spaces concentrated in degree $-1$, more precisely we assume that
$V=L[1]$, where $L$ is a vector space and $[1]$ denotes the shifting of the degree, i.e. 
$L[1]^{i}=L^{i+1}$.
Under this assumption every codifferential in $\bar{T}(V)$ (resp.: 
$\bar{S}(V)$) is determined by a linear map $q\colon \bigotimes^2 V\to V$ (resp.: 
$q\colon \bigodot^2 V\to V$) of degree $+1$.

\begin{lemma} In the above assumption:
\begin{enumerate}

\item The map  
\[L\times L\to L,\qquad xy=q(x\otimes y),\]
is an associative product if and only if $q$ induces a codifferential in $\bar{T}(V)$.

\item The map 
\[L\times L\to L,\qquad [x,y]=q(x\otimes y-y\otimes x)=xy-yx,\]
is a Lie bracket if and only if $q$ induces a codifferential in $\bar{S}(V)$.
\end{enumerate}
\end{lemma}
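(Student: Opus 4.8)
The plan is to reduce both statements to the criterion of the preceding lemma, namely that a degree $+1$ coderivation $Q$ is a codifferential precisely when $pQ^2=0$, and then to make this condition completely explicit. The decisive simplification is a degree count. Since $V$ is concentrated in degree $-1$, a homogeneous element of $\tensor^nV$ has degree $-n$; hence a degree $+1$ map $\tensor^nV\to V$ can be nonzero only for $n=2$ (this is exactly why $Q$ is encoded by a single quadratic $q$), and a degree $+2$ map $\tensor^nV\to V$ can be nonzero only for $n=3$. Therefore $pQ^2$ is concentrated on the triple power, and it suffices to evaluate it there. Moreover, because $q$ sends two tensor factors to one and the coderivation $Q$ thus lowers tensor length by exactly one, $Q^2$ maps $\tensor^3V$ into $\tensor^1V=V$, so on $\tensor^3V$ one simply has $pQ^2=Q^2$.

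For the associative case I would first read off from formula \eqref{equ.coder}, using $\deg(x)=-1$, that
\[ Q(x\otimes y\otimes z)=q(x\otimes y)\otimes z-x\otimes q(y\otimes z), \]
the sign $-1$ on the second summand being $(-1)^{\deg x}$. Applying $Q$ once more, which on two-tensors is just $q$, gives
\[ pQ^2(x\otimes y\otimes z)=q(q(x\otimes y)\otimes z)-q(x\otimes q(y\otimes z))=(xy)z-x(yz). \]
Thus $pQ^2=0$ if and only if the product $xy=q(x\otimes y)$ is associative, which is statement (1).

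For the Lie case I would work inside $\bar{T}(V)$, which is legitimate because a codifferential on $\bar{S}(V)$ extends to a coderivation of $\bar{T}(V)$ whose restriction depends only on $q|_{\bar{S}(V)}$. Since $V$ sits in odd degree, the twist action of $\Sigma_3$ reduces to the sign representation, so the symmetric tensor generated by $x,y,z$ is represented by $w=\sum_{\sigma\in\Sigma_3}\operatorname{sgn}(\sigma)\,v_{\sigma(1)}\otimes v_{\sigma(2)}\otimes v_{\sigma(3)}$ with $(v_1,v_2,v_3)=(x,y,z)$. I would then compute $pQ^2(w)=Q^2(w)$ exactly as above; grouping the resulting $q\circ q$ terms according to which variable is left outside the inner bracket collapses, using $[x,y]=q(x\otimes y-y\otimes x)$, to
\[ pQ^2(w)=[[x,y],z]+[[y,z],x]+[[z,x],y]. \]
Hence $pQ^2=0$ if and only if the bracket satisfies the Jacobi identity, which is statement (2).

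The only genuine work is the sign bookkeeping. In part (1) this is the single sign $(-1)^{\deg x}$; in part (2) it is the interplay between the signs in \eqref{equ.coder}, the weights $\operatorname{sgn}(\sigma)$ appearing in $w$, and the antisymmetrization defining the bracket. The main obstacle is therefore to organize the twelve $q\circ q$ summands of $Q^2(w)$ into the three cyclic blocks, each of the form $q([a,b]\otimes c-c\otimes[a,b])=[[a,b],c]$; once the terms are paired correctly every collapse is immediate, and antisymmetry $[y,x]=-[x,y]$ requires no verification since it is built into the definition of $[\,\cdot,\cdot\,]$.
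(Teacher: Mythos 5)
Your proposal is correct and follows essentially the same route as the paper: both reduce to the criterion $pQ^2=0$ from the preceding lemma, note that by the degree count $pQ^2$ is concentrated on $\tensor^3V$, compute $qQ(x\otimes y\otimes z)=(xy)z-x(yz)$ for part (1), and evaluate on the antisymmetrized triple tensors to collapse the associators into the Jacobi sum for part (2). The only difference is expository: you make explicit the degree bookkeeping and the identification of graded-symmetric tensors with sign-weighted (antisymmetrized) sums, which the paper leaves implicit.
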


\begin{proof} We have seen that $Q$ is a codifferential in $\bar{T}(V)$  if and only if
$pQ^2=qQ\colon \tensor^3V\to V$ is the trivial map. It is sufficient to observe that  
\[qQ(x\otimes y\otimes z)=q(q(x\otimes y)\otimes z)-q(x\otimes q(y\otimes z))=
(xy)z-x(yz).\]
Similarly $Q$ is a codifferential in $\bar{S}(V)$  if and only if for every $x_1,x_2,x_3$ we have
\begin{align*}
0=&qQ\left(\sum_{\sigma\in \Sigma_3}(-1)^{\sigma}x_{\sigma(1)}\otimes x_{\sigma(2)}\otimes x_{\sigma(3)}\right)\\
=& \sum_{\sigma\in \Sigma_3}(-1)^{\sigma} ((x_{\sigma(1)}x_{\sigma(2)})x_{\sigma(3)}-
x_{\sigma(1)}(x_{\sigma(2)}x_{\sigma(3)}))\\
=&[[x_1,x_2],x_3]+[[x_2,x_3],x_1]+[[x_3,x_1],x_2]
\end{align*}
\end{proof}

Therefore every Lie bracket on $L$ not induced by an associative product gives a codifferential  on $\bar{S}(L[1])$ which does not extend to a codifferential on 
$\bar{T}(L[1])$.

\begin{example} Let $\K$ be a field of characteristic $\not=2$ and $L$ a vector space of dimension $3$ over $\K$ with basis $A,B,H$.
Then does not exist any associative product on $L$ such that
\[ AB-BA=H,\qquad HA-AH=2A,\qquad HB-BH=-2B.\]

We prove this fact by contradiction: assume that there exists an associative product as above, 
then the pair $(L,[,])$, where $[X,Y]=XY-YX$, is a Lie algebra isomorphic to $sl_2(\K)$. Writing 
\[ H^2=\gamma_1 A+\gamma_2 B+\gamma H\]
 we have
 \[ 0=[H^2,H]=\gamma_1 [A,H]+\gamma_2 [B,H]\]
and therefore $\gamma_1=\gamma_2=0$, $H^2=\gamma H$.  
Possibly acting with the Lie automorphism
\[ A\mapsto B,\qquad B\mapsto A,\qquad H\mapsto -H,\]
it is not restrictive to assume $\gamma\neq -1$.  

Since $[AH,H]=[A,H]H=-2AH$, writing $AH=xA+yB+zH$ for some $x,y,z\in\K$ we have
\[ 0=[AH,H]+2AH=x[A,H]+y[B,H]+2xA+2yB+2zH=4yB+2zH\]
giving $y=z=0$ and $AH=xA$. Moreover $2A^2=A[H,A]=[AH,A]=[xA,A]=0$ and then $A^2=0$.
Since 
\[ 0=A(H^2)-(AH)H=\gamma AH-x AH=(\gamma x-x^2)A\]
we have either $x=0$ or $x=\gamma$. In both cases $x\not=-1$ and  then $AH+HA=(2x+2)A\not=0$.
This gives a contradiction since
\[ -AH=A(AB-H)=ABA=(BA+H)A=HA.\]
\end{example}

\bigskip
\section{The $L_{\infty}$-algebra  perturbation lemma}

The bar construction gives an equivalence from the category of $L_{\infty}$-algebras 
and the category of differential graded reduced symmetric coalgebras (see e.g. \cite{cone,fuka,K}).\par

According to Formula~\ref{equ.coder}, every coderivation $Q\colon \bar{T}(V)\to \bar{T}(V)$ of degree $+1$ can be uniquely decomposed as
$Q=d+\de$, where
\[d(\tensor^n V)\subset \tensor^n V,\qquad
\de(\tensor^n V)\subset \somdir_{i=1}^{n-1}\tensor^i V,\qquad \forall\; n>0.\]
and
\[d(a_{1}\otimes\cdots\otimes a_{n})
=\sum_{i=0}^{n-1}(-1)^{\bar{a_{1}}+\cdots+\bar{a_{i}}}
a_{1}\otimes\cdots\otimes a_{i}\otimes
d_1(a_{i+1})\otimes a_{i+2}\otimes\cdots\otimes a_{n}\]
where $d_1=Q_{|V}\colon V\to V$.
If $Q$ is a codifferential on $\bar{T}(V)$ then $d^2(V)=0$, $d$ is the natural differential on  the tensor powers of the complex $(V,d_1)$ and $\de$ is a perturbation of $d$.

If $Q$ is a codifferential on $\bar{S}(V)$ then $d^2(V)=0$ and therefore $d$ is the natural differential on  the symmetric powers of the complex $(V,d_1)$ and $\de$ is a perturbation of $d$ over $\bar{S}(V)$.

\begin{theorem}\label{thm.symmcoalgebrapert}
In the above notation, let $Q=d+\de$ be a coderivation of degree +1 on 
$\bar{T}(V)$ which is a codifferential on $\bar{S}(V)$. Let $W$ be a differential
graded subspace of $(V,d)$ and let 
$(\xymatrix{W\ar@<.4ex>[r]&V\ar@<.4ex>[l]},k)$ be a contraction.
Taking the tensor power as in Example~\ref{exa.tensorcoalgebracontractions}, we get a coalgebra contraction 
$(\xymatrix{\bar{T}(W)\ar@<.4ex>[r]^\imath&\bar{T}(V)\ar@<.4ex>[l]^\pi},h)$ where
$h=Tk$. 
Setting 
\[ D_{\de}=\sum_{n\ge 0}\pi\de (h\de)^n\imath=\sum_{n\ge 0}\pi(\de
h)^n\de\imath\colon \bar{S}(W)\to \bar{S}(W),\]
then $d+D_{\de}$ is a codifferential in  $\bar{S}(W)$ and
\[ \imath_{\de}=\sum_{n\ge 0}(h\de)^n\imath\colon (\bar{S}(W),d+D_{\de})\to 
(\bar{S}(V),d+\de)\]%
is a morphisms of differential graded coalgebras.
\end{theorem}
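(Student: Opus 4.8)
The plan is to apply the relative perturbation lemma (Theorem~\ref{thm.relapert}) to the tensor-power coalgebra contraction $(\bar{T}(W),\bar{T}(V),h)$ with $h=Tk$, taking as distinguished submodule $A=\bar{S}(V)$ inside $N=\bar{T}(V)$, and writing $M=\bar{T}(W)$. The decisive observation is that $Q=d+\de$ is assumed to be a codifferential only on $\bar{S}(V)$, and, as the example of Section~\ref{sec.coderivation} shows, in general it need not square to zero on all of $\bar{T}(V)$; hence $\de$ is a perturbation of $d$ over $A=\bar{S}(V)$ in the sense of Definition~\ref{def.relativeperturbation}, but not a genuine perturbation of $d$ on $\bar{T}(V)$. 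This is exactly the situation for which Theorem~\ref{thm.relapert} was designed.

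First I would check the standing hypotheses. Since $d$ preserves $\bar{S}(V)$ (it is the natural differential on symmetric powers), $A=\bar{S}(V)$ is a differential graded submodule of $\bar{T}(V)$. The map $\de=Q-d$ is a coderivation, and by construction it strictly lowers tensor degree; consequently $h\de$ and $\de h$ are locally nilpotent, so $\de\in\mathcal{N}(\bar{T}(V),h)$ and the maps $\imath_{\de}$, $\pi_{\de}$, $D_{\de}$ are all defined. Moreover $\de(\bar{S}(V))\subset\bar{S}(V)$ (every coderivation preserves the symmetric subcoalgebra) and $(d+\de)^2=Q^2=0$ on $\bar{S}(V)$ because $Q$ is a codifferential there; thus $\de$ is indeed a perturbation of $d$ over $A$. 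I also record that $d_M=\pi d\imath$ is the natural differential $d$ on $\bar{T}(W)$, since $W$ is a $d_1$-subcomplex of $V$.

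Next I would verify the two extra hypotheses of Theorem~\ref{thm.relapert}. Condition (1), $\pi(\bar{S}(V))\subset\bar{S}(V)\cap\bar{T}(W)=\bar{S}(W)$, holds because $\pi=T(\pi)$ is a morphism of graded coalgebras, and such morphisms carry $\bar{S}(V)$ into $\bar{S}(W)$ by the structural property of symmetric coalgebras. Condition (2), $\imath_{\de}(\bar{S}(W))\subset\bar{S}(V)$, is the heart of the matter: it \emph{cannot} be read off from any invariance of the homotopy, since the symmetrized $h=Tk$ does not preserve $\bar{S}(V)$—this is exactly why Theorem~\ref{thm.relapert} was formulated without assuming $h(A)\subset A$. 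Instead I would invoke Proposition~\ref{prop.huebKad}: the tensor contraction is a coalgebra contraction and $\de$ is a coderivation in $\mathcal{N}(\bar{T}(V),h)$, so $\imath_{\de}\colon\bar{T}(W)\to\bar{T}(V)$ is a morphism of graded coalgebras, whence $\imath_{\de}(\bar{S}(W))\subset\bar{S}(V)$ by the same structural property. I expect this to be the main conceptual obstacle, precisely because the naive tensor trick breaks down for symmetric powers.

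Finally I would assemble the conclusion. Theorem~\ref{thm.relapert} yields that $D_{\de}$ is a perturbation of $d$ over $A\cap M=\bar{S}(W)$, so $D_{\de}(\bar{S}(W))\subset\bar{S}(W)$ and $(d+D_{\de})^2=0$ on $\bar{S}(W)$, and that $\imath_{\de}\colon(\bar{S}(W),d+D_{\de})\to(\bar{S}(V),d+\de)$ is a morphism of differential graded $R$-modules. To upgrade these to the coalgebra statements I would again use Proposition~\ref{prop.huebKad}, which gives that $D_{\de}$ is a coderivation; being a degree $+1$ coderivation of $\bar{S}(W)$ that squares to zero, $d+D_{\de}$ is a codifferential on $\bar{S}(W)$. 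Since $\imath_{\de}$ is simultaneously a morphism of graded coalgebras and a cochain map for the two codifferentials, it is a morphism of differential graded coalgebras, as claimed.
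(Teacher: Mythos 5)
Your proposal is correct and follows essentially the same route as the paper: verify $\de\in\mathcal{N}(\bar{T}(V),h)$ via the tensor-degree-lowering property of $\de$, use Proposition~\ref{prop.huebKad} to get that $\imath_{\de}$ is a coalgebra morphism and $D_{\de}$ a coderivation (hence both preserve the symmetric subcoalgebras), and then apply Theorem~\ref{thm.relapert} with $N=\bar{T}(V)$, $M=\bar{T}(W)$, $A=\bar{S}(V)$. Your write-up is somewhat more explicit than the paper's (e.g.\ in checking condition (1) and in assembling the final coalgebra statements), but the argument is identical in substance.
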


\begin{proof}
Since $h(\bigotimes^n V)\subset \bigotimes^n V$ and $\de(\bigotimes^n V)\subset \bigoplus_{i=1}^{n-1}\bigotimes^i V$ we have
\[ \bigoplus_{i=1}^{n}\bigotimes^i V\subset \ker(\de h)^n\cap \ker(h\de)^n\]
and therefore $\de\in \mathcal{N}(\bar{T}(V),h)$.
According to Proposition~\ref{prop.huebKad} the maps 
\[ \imath_{\de}\colon \bar{T}(W)\to \bar{T}(V),\qquad
D_{\de}\colon \bar{T}(W)\to \bar{T}(W)\]
are respectively a morphism of graded coalgebras and a coderivation and then
\[ \imath_{\de}(\bar{S}(W))\subset\bar{S}(V),\qquad
D_{\de}(\bar{S}(W))\subset\bar{S}(W).\]
The conclusion now follows from Theorem~\ref{thm.relapert}, where
$N=\bar{T}(V)$, $M=\bar{T}(W)$ and $A=\bar{S}(V)$.
\end{proof}

\begin{remark} According to 
Proposition~\ref{prop.compositioncompatibility} the construction of Theorem~\ref{thm.symmcoalgebrapert} commutes with composition of contractions.
 \end{remark}

\begin{remark} In the notation of  Theorem~\ref{thm.symmcoalgebrapert}, if 
\[ S^nk\colon \symmetric^n V\to \symmetric^n V,\qquad S^nk=\frac{1}{n!}\sum_{\sigma\in\Sigma_n}\sigma_{\twist}\circ T^nk\circ
\sigma_{\twist}^{-1},\]
is the symmetrization of $T^nk$ and $Sk=\sum S^nk$, then 
$(\xymatrix{\bar{S}(W)\ar@<.4ex>[r]^\imath&\bar{S}(V)\ar@<.4ex>[l]^\pi},Sk)$
is a contraction but in general it is not a coalgebra contraction.
\end{remark}

In the set-up of Theorem~\ref{thm.symmcoalgebrapert} the map $\pi_{\de}\colon 
\bar{T}(V)\to \bar{T}(W)$ is a morphism of graded coalgebras and then induces a morphism
of graded coalgebras $\pi_{\de}\colon 
\bar{S}(V)\to \bar{S}(W)$ such that $\pi_{\de}\imath_{\de}$ is the identity on 
$\bar{S}(W)$. Unfortunately our proof does not imply that $\pi_{\de}$ is a morphism of 
complexes (unless $(d+\de)^2=0$ in $\bar{T}(V)$ or $D_{\de}=0$). However it follows from the homotopy classification of $L_{\infty}$-algebras \cite{K} that a morphism of differential graded coalgebras $\Pi\colon 
\bar{S}(V)\to \bar{S}(W)$ such that $\Pi\imath_{\de}=Id$ always exists.\par

We have proved that the map $\imath_{\de}\colon \bar{T}(W)\to \bar{T}(V)$ satisfies
the equation $\imath_{\de}=\imath+(h\de)\imath_{\de}$ and then 
$\imath_{\de}\colon \bar{S}(W)\to \bar{S}(V)$ is the unique morphism of symmetric graded coalgebras satisfying the recursive formula 
\begin{equation}\label{equ.recursive} \qquad\qquad
p\imath_{\de}=p\imath+kp\de\imath_{\de}\qquad\qquad (\text{where }p\colon \bar{S}(V)\to V \text{ is the  projection}). 
\end{equation}
It is possible  to prove that the validity of the Equation~\ref{equ.recursive} gives a combinatorial description of $\imath_{\de}$ as sum over rooted trees \cite{cone,fuka} and assures that 
$\imath_{\de}\colon (\bar{S}(W),d+\pi\de\imath_{\de})\to (\bar{S}(V),d+\de)$ is a morphism of differential graded coalgebras (see e.g. the arXiv version of \cite{cone}).

\end{document}